\def\newrefformat#1#2{%
  \@namedef{pr@#1}##1{#2}}
\def\prettyref#1{\@prettyref#1:}
\def\@prettyref#1:#2:{%
  \expandafter\ifx\csname pr@#1\endcsname\relax%
    \PackageWarning{prettyref}{Reference format #1\space undefined}%
    \ref{#1:#2}%
  \else%
    \csname pr@#1\endcsname{#1:#2}%
  \fi%
}
\def\indsym#1#2{%
  \setbox0=\hbox{$\m@th#1x$}%
  \kern\wd0%
  \hbox to 0pt{\hss$\m@th#1\mid$\hbox to 0pt{$\m@th#1^{#2}$}\hss}%
  \lower.9\ht0\hbox to 0pt{\hss$\m@th#1\smile$\hss}%
  \kern\wd0} 
\def\nindsym#1#2{%
  \setbox0=\hbox{$\m@th#1x$}%
  \kern\wd0%
  \hbox to 0pt{\mathchardef\nn="3236\hss$\m@th#1\nn$\kern1.4\wd0\hss}
  \hbox to 0pt{\hss$\m@th#1\mid$\hbox to 0pt{$\m@th#1^{#2}$}\hss}%
  \lower.9\ht0\hbox to 0pt{\hss$\m@th#1\smile$\hss}%
  \kern\wd0}
\theoremstyle{plain}
\newtheorem{thm}{Theorem}[section]
\numberwithin{equation}{section} %% Comment out for sequentially-numbered
\numberwithin{figure}{section} %% Comment out for sequentially-numbered
\theoremstyle{plain}
\theoremstyle{definition}
\newtheorem{defn}[thm]{Definition}
\theoremstyle{plain}
\newtheorem{lem}[thm]{Lemma} %%Delete [thm] to re-start numbering
\theoremstyle{plain}
\theoremstyle{plain}
\newtheorem{fact}[thm]{Fact}
\theoremstyle{plain}
\newtheorem{notation}[thm]{Notation}
\theoremstyle{plain}
\newtheorem{rem}[thm]{Remark}
\newtheorem{ej}[thm]{Example}
\newtheorem{preg}[thm]{Question}
\newtheorem{teo}[thm]{Theorem}
\theoremstyle{plain}
\def\Q{{\mathbb Q}}
\def\modK{{\mathcal K}}
\def\modL{{\mathcal L}}
\def\modM{{\mathcal M}}
\def\modN{{\mathcal N}}
\def\forkindep{\mathrel{\raise0.2ex\hbox{\ooalign{\hidewidth$\vert$\hidewidth\cr\raise-0.9ex\hbox{$\smile$}}}}}
\def\notind#1#2{#1\setbox0=\hbox{$#1x$}\kern\wd0
\hbox to 0pt{\mathchardef\nn=12854\hss$#1\nn$\kern1.4\wd0\hss}
\hbox to 0pt{\hss$#1\mid$\hss}\lower.9\ht0 \hbox to 0pt{\hss$#1\smile$\hss}\kern\wd0}
\title{General real-valued theories with the Schr\"oder-Bernstein property are stable}
\author{Alexander Berenstein, Nicol\'as Cuervo Ovalle, and Isaac Goldbring}
\address{Department of Mathematics\\ Los Andes University, Cra. 1 \#18a-12, edificio H, Bogotá, Colombia, 111711.}
\urladdr{https://pentagono.uniandes.edu.co/~aberenst/index.html}
\address{Department of Mathematics\\ Los Andes University, Cra. 1 \#18a-12, edificio H, Bogotá, Colombia, 111711.}
\email{n.cuervo10@uniandes.edu.co}
\thanks{The second-named author was partially supported by NSF grant DMS-2054477. He would also like to thank the UC Irvine Department of Mathematics for their hospitality.}
\address{Department of Mathematics\\University of California, Irvine, 340 Rowland Hall (Bldg.\# 400),
Irvine, CA 92697-3875}
\email{isaac@math.uci.edu}
\urladdr{http://www.math.uci.edu/~isaac}
\thanks{The third-named author was partially supported by NSF grant DMS-2054477.}
\date{}
\thanks{ }
\begin{document}

\vspace{-0.7cm}\begin{abstract}
We show that every general theory  \`a la Keisler with the Schr\"oder-Bernstein property is stable.  This generalizes the corresponding result from classical logic due to John Goodrick.  Our proof uses the classical result (generalized to the case that the instability is witnessed by an infinitary formula) together with a discretization technique introduced by Keisler and the third-named author.  We speculate on how our techniques could be adapted to show that every continuous theory with the Schr\"oder-Bernstein property is stable.
\end{abstract}

\maketitle
\section{Introduction}
The Schröder-Bernstein Theorem states that whenever $A$ and $B$ are two sets for which there exist injective functions $f:A\to B$ and $g:B\to A$, then there exists a bijective function $h:A\to B$. It is natural to ask when this property can be extended to a class of objects for which the existence of injective morphisms implies the existence of an isomorphism. Based on this idea, for a given language $\modL$, we say that an $\modL$-theory $T$ has the \textit{Schröder-Bernstein property} (or \textit{SB-property} for short) if, whenever two models $\modM,\modN\models T$ admit elementary embeddings $\phi:\modM\to\modN$ and $\psi:\modN\to\modM$, then there exists an isomorphism $\Phi:\modM\to\modN$.

When $\modL$ is a first-order discrete language, the SB-property has been studied extensively by different authors (see \cite{SBJohnthesis,SBpropertyWeak,SBpropertyJohn,SBpropertyW-stable,MutualEmbedding}). On the other hand, when $\modL$ is a first-order continuous language, this property was first studied in \cite{ArBeCu}.

An important result about theories with the SB property is that they are stable, a result first proved by Goodrick in \cite{SBJohnthesis}.  In fact, Goodrick showed that if any $\mathcal{L}_{\infty,\omega}$ formula in $T$ has the order property, then $T$ does not have the SB-property; see also Fact \ref{DiscreteUnstableSB} below.

It is thus natural to ask whether or not the same result holds in continuous logic:

\begin{preg}\label{mainquestion}
If $T$ is a complete, continuous theory with the SB-property, must $T$ be stable?
\end{preg}

While we cannot settle this question at the moment, in this paper we prove a related result, namely that the version of the above question for Keisler's general real-valued logic \cite{keisler} has a positive solution:

\begin{teo}\label{MainTeo}
If $T$ is a complete \emph{general} theory with the SB-property, then $T$ is stable.
\end{teo}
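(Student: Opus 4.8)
The plan is to prove the contrapositive: assuming $T$ is unstable, I will produce two models of $T$ that elementarily embed into one another yet are not isomorphic, witnessing the failure of the SB-property. The first step is to extract a usable consequence of instability. As in the classical and continuous settings, a complete general theory that is not stable ought to have a formula with the order property, so we fix an $\modL$-formula $\varphi(\bar x,\bar y)$, rationals $r<s$, a model $M\models T$, and a sequence $(\bar a_i)_{i<\omega}$ in $M$ with $\varphi^M(\bar a_i,\bar a_j)\le r$ for $i<j$ and $\varphi^M(\bar a_i,\bar a_j)\ge s$ for $j\le i$. (If the characterization of stability by the order property is not already available for general theories, I would establish it first, or else start from whatever combinatorial witness of instability is at hand and feed that into the discretization.)

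The second step applies the discretization technique of Keisler and the third-named author. To each general $\modL$-structure $M$ it associates a discrete structure $M^{d}$ in a countable relational language $\modL^{d}$ whose symbols record, for each $\modL$-formula $\psi(\bar x)$ and rational $q$, the set $\{\bar b : \psi^{M}(\bar b)<q\}$. This assignment is functorial, and the translation is faithful enough for our purposes: an elementary embedding $M\to N$ of general structures induces an (at least $\mathcal{L}_{\infty,\omega}$-)elementary embedding $M^{d}\to N^{d}$, such an embedding of discretizations conversely lifts to one of the originals, and $M^{d}\cong N^{d}$ if and only if $M\cong N$. Moreover the class of discretizations of models of $T$ is axiomatized by an $\mathcal{L}_{\infty,\omega}$-theory $T^{d}$. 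It follows that $T$ has the SB-property exactly when $T^{d}$ does.

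The third step pushes the instability through and invokes the classical result. For any rational $t$ with $r<t<s$, the single $\modL^{d}$-relation ``$\varphi<t$'' holds of $(\bar a_i,\bar a_j)$ precisely when $i<j$, so some ($\mathcal{L}_{\infty,\omega}$-)formula has the order property in $T^{d}$. By the generalized form of Goodrick's theorem (Fact \ref{DiscreteUnstableSB}), there are $P,Q\models T^{d}$ with mutual ($\mathcal{L}_{\infty,\omega}$-)elementary embeddings and $P\not\cong Q$. Since $T^{d}$ axiomatizes exactly the discretizations, $P=M^{d}$ and $Q=N^{d}$ for some $M,N\models T$; pulling the embeddings and the non-isomorphism back through the dictionary of the second step, $M$ and $N$ witness that $T$ fails the SB-property.

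The step I expect to be the main obstacle is the second one: arranging the discretization so that it at once \emph{preserves} and \emph{reflects} elementary embeddings and isomorphisms of general structures, and recognizing the resulting class of discrete structures as the models of an $\mathcal{L}_{\infty,\omega}$-theory, so that the SB-property genuinely commutes with discretization. A related subtlety is guaranteeing that the mutually embeddable, non-isomorphic pair delivered by Goodrick's argument can be taken inside the image of the discretization functor; the axiomatizability of $T^{d}$ is precisely what makes this free, and failing a clean axiomatization one would instead rerun Goodrick's Ehrenfeucht--Mostowski-style construction directly within the category of general models. It is also worth tracking where infinitary formulas first become unavoidable --- presumably already in matching elementary embeddings of general structures with those of their discretizations --- since that is what forces the use of Goodrick's theorem in its $\mathcal{L}_{\infty,\omega}$ form rather than the original finitary one.
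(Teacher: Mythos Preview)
Your overall strategy matches the paper's: discretize, show the order property descends, invoke Goodrick, and pull the failure of SB back up. The place where your argument is incomplete is exactly the one you flag, but your proposed fix does not actually work. You say that once the class of discretizations is seen to be $\modL_{\infty,\omega}$-axiomatizable by some $T^{d}$, getting Goodrick's pair to lie in $T^{d}$ ``is free.'' It is not: Goodrick's theorem (Fact~\ref{DiscreteUnstableSB} and the construction in Facts~\ref{EM(I)} preceding it) takes as input a \emph{first-order} theory with the $\modL_{\infty,\omega}$ order property and outputs EM-type models of that first-order theory. There is no mechanism forcing those EM models to satisfy the additional $\modL_{\infty,\omega}$ axioms that single out genuine discretizations, so your step ``$P=M^{d}$ and $Q=N^{d}$ for some $M,N\models T$'' is unjustified as written.

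The paper's solution is short but is the one idea missing from your sketch. Work over a first-order completion $S$ of $T_\downarrow$ (the theory of $\modM_\downarrow$ for $\aleph_1$-saturated $\modM\models T$). Write down an explicit $(\modL_\downarrow)_{\infty,\omega}$-\emph{sentence} $\sigma$ such that $\modK\models T_\downarrow$ is a genuine discretization (``standard'', i.e.\ $\modK=\modK_{\uparrow\downarrow}$) if and only if $\modK\models\sigma$. Now, instead of feeding Goodrick the bare order formula $P_{\leq 1/2}(\bar x,\bar y)$, feed him $\psi(\bar x,\bar y):=P_{\leq 1/2}(\bar x,\bar y)\wedge\sigma$. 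Since $\sigma$ is a sentence, $\psi$ still has the order property in $S$. Goodrick's machinery then produces EM models $\modK_\alpha$ of $S$ whose skeleta witness $R\leftrightarrow\psi$; since $R$ holds of some pair in the skeleton, so does $\psi$, hence so does its conjunct $\sigma$, and because $\sigma$ is a sentence each $\modK_\alpha$ satisfies $\sigma$ and is therefore standard. Once standardness is secured, $\modK_\alpha=(\modK_\alpha)_{\uparrow\downarrow}$ and the pull-back runs exactly as you describe (elementarity upstairs coming from Morleyization of $T$). In short: you do not need Goodrick's theorem for an $\modL_{\infty,\omega}$-theory; you need a single $\modL_{\infty,\omega}$ sentence detecting standardness, and you fold it into the order formula so that the EM construction is forced to respect it.
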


A brief discussion of Keisler's general real-valued logic is given in Subsection 2.1 below and its connection to continuous logic is discussed in the final section.

In order to show that classical theories with the SB-property are stable, Goodrick  \cite{SBJohnthesis} uses as a starting point an argument by Shelah, to construct, for a complete, countable, unstable theory $T$, a family of $2^\kappa$ non-isomorphic models of cardinality $\kappa$ for all infinite cardinals $\kappa>\aleph_0$ (see \cite[Theorem VIII.3.2]{Shelah})\footnote{Goodrick does not assume that the theories under consideration are countable.}. The changes introduced in the construction presented \cite{SBJohnthesis} ensure that the non-isomorphic models in a subfamily of size $2^\kappa$ are also elementarily bi-embeddable.

Instead of reproducing the arguments from \cite{SBJohnthesis} in the continuous setting, 
%not because it cannot be adapted to the continuous setting (we are not sure whether or not it can), but because the original argument is quite technical, and its ``translation'' to the continuous setting could be even more complicated. For this reason, we pursue a different approach:
we use the models constructed in the proof in \cite{SBJohnthesis}, along with a ``discretization'' method introduced by Goldbring and Keisler in \cite{GoKei} which associates, to each general theory $T$ in a language $\mathcal{L}$, a discrete theory $T_\downarrow$ in the language $\mathcal{L}_\downarrow$. We then argue as folllows: in order to obtain a contradiction, assume that $T$ is a complete, unstable general theory with the SB-property. We construct an $(\modL_\downarrow)_{\infty,\omega}$-formula $\psi(x,y)$ with the order property in $T_\downarrow$. Consequently, by Goodrick's result above, the associated theory $T_\downarrow$ (or, more precisely, a certain completion of it) does not have the SB-property. Using the collection of models constructed in \cite{SBJohnthesis} that witness the failure of the SB-property for $T_\downarrow$, and with some extra work, we show how to construct elementarily bi-embeddable models of $T$ that are not isomorphic, contradicting that $T$ has the SB-property. We stress that we make crucial use of the fact that the statement Goodrick's result holds even if the instability is witnessed by an infinitary formula.

% It is worth emphasizing that the proof of the general version in \cite{SBJohnthesis} of Theorem \ref{MainTeo} (see Fact \ref{DiscreteUnstableSB}) works with the general case of an $L_{\infty,\omega}$-formula with the $\modL_{\infty,\omega}$-order property, which was very beneficial for the development of our argument.

This paper is organized as follows. In Section 2, we recall some basic notions and review some preliminary results that we will use in the paper:  in Subsection 2.1, we summarize Keisler's general real-valued logic, in Subsection 2.2 we summarize and further expand upon Goldbring and Keisler's technique of discretizing a general structure, and in Subsection 3.3 we summarize Goodrick's proof that classical unstable theories do not have the SB-property. In Section 3, we prove our main theorem.  In Section 4, we offer two possible avenues for adapting our proof to settle Question \ref{mainquestion}.

\section{Preliminaries}
In this section, we present some results and definitions needed in the next section to prove the main theorem. We divide the section into three parts. First, we review Keisler's theory of general real-valued structures.  Then we recall the “discretization” method presented in \cite{GoKei}, pointing out some additional facts that will be used later on in the paper.  Finally, we recall some important aspects of Goodrick's proof that classical unstable theories do not have the SB-property, as presented in \cite{SBJohnthesis}. 

We start by reminding the reader of the definition of an infinitary formula in $\modL_{\infty,\omega}$, as it plays an important role in the sequel. 

\begin{defn}
For any first order, discrete language $\modL$ and any infinite cardinal $\kappa$, the logic $\modL_{\kappa,\omega}$ is the smallest set of formulas that contains all formulas in $\modL$, is closed under Boolean operations, and for any index set $I$ with $|I|<\kappa$ and  any set $\{\varphi_i(\vec x)\ |\ i\in I\}$ of formulas in $\modL_{\kappa,\omega}$, where $\vec x$ has finite length, the conjunction $\bigwedge_{i\in I}\varphi_i(\vec x)$ is in $\modL_{\kappa,\omega}$. We write
$\modL_{\infty,\omega}$ for the union of the $\modL_{\kappa,\omega}$'s as $\kappa$ ranges over all cardinals.
\end{defn}

\subsection{General real-valued structures}

In this paper, we will use three slightly different model-theoretic contexts, and we take the opportunity here to advise the reader as to the subtle differences.

The first context is that of Keisler's general real-valued logic from \cite{keisler}.\footnote{For simplicity, in this paper, we restrict to the $[0,1]$-valued version of this logic.}  Here, a language $\mathcal{L}$ consists of a set of predicate, function, and constant symbols; this is the same as in usual first-order logic.  One forms $L$-terms as usual and \emph{general atomic $L$-formulae} are constructed by applying predicate symbols to terms; \emph{we note that, in general logic, unlike in classical logic, there is not a nological symbol for equality}.  One obtains more \emph{general formulae} by plugging them into connectives which, like in continuous logic, are continuous functions $[0,1]^n\to [0,1]$ as $n$ varies.  Finally, the quantifiers of this logic are $\sup$ and $\inf$, just as in continuous logic.  A \emph{general $L$-theory} is a set of general $L$-sentences.

We emphasize an important point:  although the notion of a language is the same in both general logic and first-order logic, the corresponding notion of formula is quite different (both in what we consider to be an atomic formula and in the construction of more complicated formulae from pre-existing formulae).  As a result, given a language $\mathcal{L}$, we will emphasize the difference between general $\mathcal{L}$-formulae/theories and first-order $\mathcal{L}$-formulae/theories. 

A \emph{general $\mathcal{L}$-structure} interprets the function and constant symbols just as in classical logic, but interprets the predicate symbols as functions from the appropriate cartesian power of the universe to $[0,1]$.  Interpretations of formulae are as expected, for details see \cite{keisler}.  Once again, given a language $\mathcal{L}$, we will explicitly distinguish between general $\mathcal{L}$-structures and first-order $\mathcal{L}$-structures.

Given a general $\mathcal{L}$-structure $\mathcal{K}$, one defines the notion of \emph{Leibniz equality} $\doteq$ on $\mathcal{K}$ by declaring $a\doteq b$ if and only if, for all atomic formulae $\varphi(x,y)$ and all $c\in \mathcal{K}$, we have $\varphi^{\mathcal{K}}(a,c)=\varphi^{\mathcal{K}}(b,c)$.  If $\doteq$ coincides with ordinary equality, we call $\mathcal{K}$ a \emph{reduced} $\mathcal{L}$-structure.  Given an arbitrary $\mathcal{L}$-structure, one defines its \emph{reduction} to be the obvious quotient of $\mathcal{K}$ by $\doteq$ (see \cite[Subsection 2.1]{keisler} for details).

In the last section of this paper, we explain how continuous logic fits in as a special case of general real-valued logic.

\subsection{Discretization}

Throughout this subsection, $\modL$ denotes an arbitrary language.

\begin{defn}
Let $\modL_{\downarrow}$ be the language with the same function and constant symbols as $\modL$ and with an $n$-ary predicate symbol $P_{\leq r}$ for each $n$-ary predicate symbol $P$ of $\modL$ and rational $r\in[0,1)$.  For a reduced $\modL$-structure $\modM$, let $\modM_\downarrow$ be the first-order $\modL_\downarrow$-structure with the same universe, functions, and constants as $\modM$, and such that for each $n$-ary predicate symbol $P$, each rational $r\in [0,1)$, and each $\vec a\in M^n$, we have $\modM_\downarrow\models P_{\leq r}(\vec a)$ if and only if $P^{\modM}(\vec a)\leq r$. 

We say that a first-order $\mathcal{L}_\downarrow$-structure $\modK$ is \textit{increasing} if for every predicate $P$ of $\modL$ and rational $r\leq s$ in $[0,1)$, we have $\modK\models (\forall \vec x)[P_{\leq r}(\vec x)\Rightarrow P_{\leq s}(\vec x)].$

If $\mathcal{K}$ is a first-order $\mathcal{L}_\downarrow$-structure, let $\modK_\uparrow$ be the reduction of the general $\mathcal{L}$-structure  $\modN$ with the same universe, functions and constants as $\modK$, and such that for each $n$-ary predicate symbol $P$ of $\modL$ and $\vec a\in K^n$, $P^\modN(\vec a)=\inf\{s\in \mathbb{Q}\cap [0,1) \  : \ \modK\models P_{\leq s}(\vec a)\}.$
\end{defn}

\begin{fact}[Lemma 5.5 in \cite{GoKei}]\label{downup}
For each reduced $\mathcal{L}$-structure $\modM$, we have that $\modM_\downarrow$ is increasing and $\modM=\modM_{\downarrow\uparrow}.$
\end{fact}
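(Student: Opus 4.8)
The statement is a pair of unwindings of the definitions, so I would prove the two assertions separately. For \emph{increasing}: fix an $n$-ary predicate symbol $P$ of $\modL$, rationals $r\leq s$ in $[0,1)$, and a tuple $\vec a\in M^n$. If $\modM_\downarrow\models P_{\leq r}(\vec a)$, then by definition of $\modM_\downarrow$ we have $P^\modM(\vec a)\leq r\leq s$, hence $\modM_\downarrow\models P_{\leq s}(\vec a)$. Since $\vec a$ was arbitrary, $\modM_\downarrow\models(\forall\vec x)[P_{\leq r}(\vec x)\Rightarrow P_{\leq s}(\vec x)]$, so $\modM_\downarrow$ is increasing.

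For the equality $\modM=\modM_{\downarrow\uparrow}$, I would let $\modN$ denote the general $\mathcal{L}$-structure whose reduction is $\modM_{\downarrow\uparrow}$. By the definition of $(\cdot)_\uparrow$ applied to $\modM_\downarrow$, the structure $\modN$ has the same universe, functions, and constants as $\modM_\downarrow$, and hence as $\modM$, while for each $n$-ary predicate symbol $P$ and each $\vec a\in M^n$,
\[
P^\modN(\vec a)=\inf\{s\in\mathbb{Q}\cap[0,1)\ :\ \modM_\downarrow\models P_{\leq s}(\vec a)\}=\inf\{s\in\mathbb{Q}\cap[0,1)\ :\ P^\modM(\vec a)\leq s\}.
\]
The crux is to verify that this infimum equals $P^\modM(\vec a)$. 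Writing $t:=P^\modM(\vec a)\in[0,1]$: if $t<1$, then by density of the rationals the set $\{s\in\mathbb{Q}\cap[0,1):t\leq s\}$ is nonempty and has infimum exactly $t$; if $t=1$, the set is empty and, under the natural convention $\inf\emptyset=1$ (legitimate since all predicate values lie in $[0,1]$), the infimum is again $t$. Thus $P^\modN=P^\modM$ for every predicate symbol, and since $\modN$ and $\modM$ agree on everything else as well, $\modN=\modM$ as general $\mathcal{L}$-structures.

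Finally I would observe that reduction does nothing here: since $\modM$ is reduced by hypothesis and $\modN=\modM$, the structure $\modN$ is already reduced, so $\modM_{\downarrow\uparrow}$, being the reduction of $\modN$, equals $\modN=\modM$. I do not expect a genuine obstacle in this argument; the only points requiring care are the bookkeeping around the empty-infimum convention and splitting into the cases $t<1$ and $t=1$, together with using the hypothesis that $\modM$ is reduced to ensure that passing to $\modM_\downarrow$ and back leaves the underlying set untouched (no quotient is taken in either direction).
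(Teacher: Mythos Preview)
Your argument is correct: both assertions follow by unwinding the definitions exactly as you do, and you handle the only delicate points (the empty-infimum convention when $P^{\modM}(\vec a)=1$, and the fact that no reduction is needed because $\modM$ is already reduced) properly. The paper does not give its own proof of this statement---it is recorded as a Fact cited from \cite{GoKei}---so there is nothing to compare against, but your proof is the natural one and would serve perfectly well.
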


\begin{defn}
For a general $\modL$-theory $T$, let $T_\downarrow$ be the first-order $\modL_\downarrow$-theory of the class of all increasing $\modL_\downarrow$-structures $\modK$ such that $\modK_\uparrow\models T$.
% , that is
% $$T_\downarrow=\{\theta\ : \ (\forall \modK) \ \modK_\uparrow\models T\Rightarrow\modK\models\theta\}. $$
\end{defn}

\begin{fact}[Lemma 5.7 in \cite{GoKei}] 
Let $T$ be a general $\modL$-theory. Then:
\begin{itemize}
    %\item[i)] If $T\models U$ then $T_\downarrow\models U_\downarrow$.
    \item[i)] If $\modM\models T$ is reduced, then $\modM_\downarrow\models T_\downarrow$. 
    \item[ii)] Every first-order model of $T_\downarrow$ is increasing. 
    \item[(iii)]\label{updownrule} For every first-order $\modL_\downarrow$-structure $\modK$, we have $\modK\models T_\downarrow$ if and only if $\modK_\uparrow\models T$.
\end{itemize}
\end{fact}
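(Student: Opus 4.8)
The plan is to treat the three parts separately. Parts (i), (ii) and the backward direction of (iii) are quick consequences of Fact~\ref{downup} together with the fact that, by definition, $T_\downarrow$ is the common first-order $\modL_\downarrow$-theory of a certain class of structures; the real content is the forward direction of (iii).

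For (i): if $\modM\models T$ is reduced, then by Fact~\ref{downup} the structure $\modM_\downarrow$ is increasing and $\modM_{\downarrow\uparrow}=\modM\models T$, so $\modM_\downarrow$ belongs to the defining class of $T_\downarrow$ and hence $\modM_\downarrow\models T_\downarrow$. For (ii): for all rationals $r\le s$ in $[0,1)$ the sentence $(\forall\vec x)(P_{\le r}(\vec x)\Rightarrow P_{\le s}(\vec x))$ holds in every increasing $\modL_\downarrow$-structure, so in every member of the defining class of $T_\downarrow$, so it lies in $T_\downarrow$ and hence holds in every model of $T_\downarrow$. For the backward direction of (iii) --- which, consistently with (ii), we read with $\modK$ increasing --- an increasing $\modK$ with $\modK_\uparrow\models T$ is by definition a member of the defining class of $T_\downarrow$, so $\modK\models T_\downarrow$.

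The forward direction of (iii) is the heart of the matter, since a model $\modK$ of $T_\downarrow$ need not be of the form $\modM_\downarrow$. I would prove it through a \emph{translation lemma}: for every general $\modL$-formula $\varphi(\vec x)$ and all rationals $0\le p<q<1$ there is a first-order $\modL_\downarrow$-formula $\varphi_{[p,q]}(\vec x)$ such that, for every increasing $\modL_\downarrow$-structure $\modK$ and every $\vec a$, one has $\varphi^{\modK_\uparrow}(\vec a)\le p\Rightarrow\modK\models\varphi_{[p,q]}(\vec a)$ and $\modK\models\varphi_{[p,q]}(\vec a)\Rightarrow\varphi^{\modK_\uparrow}(\vec a)\le q$. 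This is proved by induction on $\varphi$: for an atomic formula $P(\vec t)$ one takes $\varphi_{[p,q]}:=P_{\le q'}(\vec t(\vec x))$ for any rational $q'\in(p,q)$ (using that $P^{\modK_\uparrow}(\vec a)=\inf\{s\in\Q\cap[0,1):\modK\models P_{\le s}(\vec a)\}$ and that $\modK$ is increasing); for a quantifier $\sup_y\psi$ one takes $(\forall y)\,\psi_{[p',q']}$ with $p<p'<q'<q$, and dually $(\exists y)$ for $\inf_y$; and for a continuous connective $u(\psi_1,\dots,\psi_k)$ one uses uniform continuity of $u$ to localize each value $\psi_j^{\modK_\uparrow}(\vec a)$ into an interval of a sufficiently fine rational grid by a Boolean combination of the formulae given by the induction hypothesis, and lets $\varphi_{[p,q]}$ be the \emph{finite} disjunction, over those grid cells on which $u\le q$, of the corresponding localizations. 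Granting the translation lemma, the forward direction of (iii) follows: $\modK$ is increasing by (ii); fix $\sigma\in T$ and a rational $\varepsilon\in(0,1)$; every $\modK'$ in the defining class of $T_\downarrow$ has $(\modK')_\uparrow\models T$, hence $\sigma^{(\modK')_\uparrow}=0\le\varepsilon/2$, hence $\modK'\models\sigma_{[\varepsilon/2,\varepsilon]}$; so $\sigma_{[\varepsilon/2,\varepsilon]}\in T_\downarrow$, whence $\modK\models\sigma_{[\varepsilon/2,\varepsilon]}$, whence $\sigma^{\modK_\uparrow}\le\varepsilon$; letting $\varepsilon\to 0$ gives $\sigma^{\modK_\uparrow}=0$, so $\modK_\uparrow\models T$.

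I expect the main obstacle to be the connective case of the translation lemma. The formulae $\varphi_{[p,q]}$ are only ``fuzzy'' --- there is a genuine gap between the threshold $p$ below which they are forced to hold and the threshold $q$ they force --- so composing them through a continuous connective requires localizing each subformula's value to within one grid interval using \emph{both} an upper threshold and a lower one (the latter obtained by negating a suitable formula from the induction hypothesis), carefully tracking the accumulated slack so that the chosen grid cells really do satisfy $u\le q$, and handling the boundary cells of the grid. This is a routine kind of estimate in real-valued model theory, but it is where essentially all the work lies; the atomic and quantifier cases, and parts (i), (ii) and the backward direction of (iii), are straightforward.
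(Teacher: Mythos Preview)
The paper does not prove this statement; it is quoted as a black-box fact from \cite{GoKei}. Your proposal is correct and is essentially the argument given there: parts (i), (ii), and the easy half of (iii) drop out of Fact~\ref{downup} and the definition of $T_\downarrow$, while the forward half of (iii) is obtained via an approximation scheme of exactly the kind you sketch, associating to each general formula $\varphi$ and pair of rationals $p<q$ a first-order $\modL_\downarrow$-formula sandwiched between the conditions $\varphi^{\modK_\uparrow}\le p$ and $\varphi^{\modK_\uparrow}\le q$. Your inductive treatment of atomic formulas, quantifiers, and connectives is the standard one, and your identification of the connective step as the place where the bookkeeping lives is accurate.

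Your caveat about the backward direction of (iii) is well taken: read literally for an arbitrary $\modL_\downarrow$-structure $\modK$, the implication ``$\modK_\uparrow\models T\Rightarrow\modK\models T_\downarrow$'' can fail when $\modK$ is not increasing (since (ii) forces every model of $T_\downarrow$ to be increasing), so one should indeed read it for increasing $\modK$. This has no effect on the paper, which only invokes the forward direction.
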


Although going down and then up returns back to the structure we started with, this is not necessarily the case with going up and then down:

\begin{ej}\label{standard}
Suppose that $\modM$ is the (reduced) general structure whose underlying universe is the interval $[0,1]$ and with a predicate $P$ such that $P^{\modM}(x)=x$ for all $x\in [0,1]$.  Let $\mathcal{K}:=\mathcal{M}_\downarrow$ and let $\mathcal{K}'$ be the first-order $\mathcal{L}_\downarrow$ structure that is just like $\mathcal{K}$ except that $\mathcal{K}'\not\models P_{\leq 0}(0)$.  Then $\mathcal{K}'_\uparrow=\mathcal{M}$, whence $\mathcal{K}'_{\uparrow\downarrow}=\mathcal{K}\not=\mathcal{K}'$.
\end{ej}

The previous example notwithstanding, sometimes going up and then down does gives us back the original structure.  Essentially, there are two hurdles one needs to avoid:  1)  ensuring that the reduction process is unnecessary when going up, and 2) ensuring that the ``incompleteness'' phenomenon appearing in the previous example does not occur.  Towards this end, let $\sigma_1$ and $\sigma_2$ denote the following two first-order $(\modL_\downarrow)_{\infty,\omega}$-sentences:

$$\forall x,y\left(x\not=y\to \bigvee_{\varphi(u,v)\in \modL_{at}}\bigvee_{r\in \mathbb{Q}^{>0}\cap [0,1)}\exists z \neg[\varphi_{\leq r}(x,z) \leftrightarrow \varphi_{\leq r}(y,z)]\right)$$
$$\bigwedge_{Q\in \mathcal{L}}\bigwedge_{r\in\Q^{>0}\cap [0,1)}\left((\forall\Vec{x})\left(\left[\bigwedge_{s> r}Q_{\leq s}(\vec x)\right]\to Q_{\leq r}(\vec x)\right) \right).$$
Here, in the definition of $\sigma_1$, the first infinite disjunction is over all atomic formulae in two variables, and the notation $\varphi_{\leq r}$ denotes $P_{\leq r}(t_1,\ldots,t_n)$, where $P$ is a predicate symbol of $\mathcal{L}$, $t_1,\ldots,t_n$ are $\mathcal{L}$-terms, and $\varphi=P(t_1,\ldots,t_n)$.

The following lemma is clear:

\begin{lem}
For first-order $\modK\models T_\downarrow$, we have that $\modK\models \sigma_1$ if and only if, in the formation of $\modK_\uparrow$, no reduction procedure is needed.

\end{lem}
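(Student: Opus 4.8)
The plan is to unwind the definitions on the two sides of the biconditional and match them up; the only real content is the interplay between the first-order predicates $P_{\le r}$ of $\modK$ and the infimum defining the predicates of $\modK_\uparrow$. Recall that $\modK_\uparrow$ is, by construction, the reduction of the general $\modL$-structure $\modN$ having the same universe, functions, and constants as $\modK$ and predicates $P^\modN(\vec a)=\inf\{s\in\Q\cap[0,1):\modK\models P_{\le s}(\vec a)\}$. Thus ``no reduction procedure is needed in the formation of $\modK_\uparrow$'' means exactly that $\modN$ is already reduced, i.e.\ that for every pair $a\neq b$ in $\modK$ there are an atomic $\modL$-formula $\varphi(x,y)$ and an element $c$ with $\varphi^\modN(a,c)\neq\varphi^\modN(b,c)$. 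Before treating either direction, I would record the following bookkeeping, which uses only that $\modK$, being a model of $T_\downarrow$, is increasing: for every atomic $\varphi(x,y)$, all $a,c$, and every rational $r\in[0,1)$, one has $\modK\models\varphi_{\le r}(a,c)\Rightarrow\varphi^\modN(a,c)\le r$ (immediate from the definition of $\varphi^\modN$ as an infimum) and $\varphi^\modN(a,c)<r\Rightarrow\modK\models\varphi_{\le r}(a,c)$ (some rational $s<r$ then satisfies $\modK\models\varphi_{\le s}(a,c)$, and increasingness propagates this to $\varphi_{\le r}$).

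For the implication ``no reduction needed $\Rightarrow\modK\models\sigma_1$'', I would fix $a\neq b$, take an atomic $\varphi$ and an element $c$ with, say, $\varphi^\modN(a,c)<\varphi^\modN(b,c)$, and pick a rational $r$ strictly between these two reals; such an $r$ can plainly be taken in $\Q^{>0}\cap[0,1)$. By the bookkeeping above, $\modK\models\varphi_{\le r}(a,c)$ while $\modK\not\models\varphi_{\le r}(b,c)$, so the pair $(a,b)$ witnesses the matrix of $\sigma_1$ through the disjunct indexed by $\varphi$ and $r$; since $a,b$ were arbitrary, $\modK\models\sigma_1$. The converse I would prove contrapositively: if $\modN$ is not reduced, fix $a\neq b$ with $a\doteq b$ in $\modN$, so that $\varphi^\modN(a,c)=\varphi^\modN(b,c)$ for every atomic $\varphi$ and every $c$; applying the bookkeeping to both $a$ and $b$, one gets that for every atomic $\varphi$, every rational $r$, and every $c$ the formulas $\varphi_{\le r}(a,c)$ and $\varphi_{\le r}(b,c)$ receive the same truth value in $\modK$, so the pair $(a,b)$ falsifies every disjunct of the matrix of $\sigma_1$, whence $\modK\not\models\sigma_1$.

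The step I expect to need the most care --- indeed the only real obstacle --- is the ``boundary'' case in which a value $\varphi^\modN(a,c)$ is itself a rational $r$: there the convenient equivalence ``$\modK\models\varphi_{\le r}(a,c)\iff\varphi^\modN(a,c)\le r$'' can genuinely fail, this being exactly the ``incompleteness'' illustrated in Example~\ref{standard}. In the forward direction it is harmless, since we separated two \emph{distinct} reals by a rational lying strictly between them, so neither of them is the chosen $r$. In the reverse direction one is instead handed a possibly-boundary $r$ and must still rule out that $\varphi_{\le r}$ separates $a$ from $b$; since $\modK$ is increasing, the sets $\{s\in\Q\cap[0,1):\modK\models\varphi_{\le s}(a,c)\}$ and $\{s\in\Q\cap[0,1):\modK\models\varphi_{\le s}(b,c)\}$ can differ at most at their common infimum $\varphi^\modN(a,c)=\varphi^\modN(b,c)$, and excluding that last discrepancy is precisely the job of the companion sentence $\sigma_2$. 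So the reverse direction is entirely clean once $\modK$ satisfies $\sigma_2$ as well --- the regime in which this lemma is applied --- while the forward direction, the one actually used to see that a reduced model of $T$ gives rise to a structure satisfying $\sigma_1$, holds for every $\modK\models T_\downarrow$.
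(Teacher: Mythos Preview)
Your analysis goes well beyond the paper's own treatment, which simply declares the lemma ``clear'' and offers no argument. In fact you have correctly identified a genuine inaccuracy in the statement: the direction ``$\modK\models\sigma_1\Rightarrow$ no reduction is needed'' can fail. For a concrete witness, take $\modL=\{P\}$ with $P$ unary, let $T$ be any general theory forcing $P\equiv\tfrac12$, and let $\modK$ have two points $a,b$ with $\modK\models P_{\le r}(a)\Leftrightarrow r\ge\tfrac12$ and $\modK\models P_{\le r}(b)\Leftrightarrow r>\tfrac12$. Then $\modK$ is increasing and $\modK_\uparrow$ is the one-point model of $T$ (so $\modK\models T_\downarrow$); the pre-reduction structure $\modN$ has $P^\modN(a)=P^\modN(b)=\tfrac12$, so reduction is genuinely required; yet $P_{\le 1/2}$ separates $a$ from $b$, whence $\modK\models\sigma_1$. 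This is precisely the boundary phenomenon you isolate in your third paragraph, and it confirms that the contrapositive argument in your second paragraph cannot be completed from the stated bookkeeping alone.

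Your proposed repair is correct: under $\sigma_2$ one has the clean equivalence $\modK\models\varphi_{\le r}(a,c)\Leftrightarrow\varphi^\modN(a,c)\le r$ for every rational $r>0$, and then the contrapositive goes through verbatim. Since Lemma~\ref{DownLemma} only ever invokes this direction in the presence of $\sigma_2$, nothing in the paper's overall argument is affected. Your treatment of the other direction (no reduction needed $\Rightarrow\modK\models\sigma_1$) is correct and complete as written.
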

Call a model of $T_\downarrow$ \textbf{standard} if it is of the form $M_\downarrow$ for some reduced $M\models T$.  Finally set $\sigma:=\sigma_1\wedge \sigma_2$.

\begin{lem}\label{DownLemma}
For first-order $K\models T_\downarrow$, the following are equivalent:
\begin{enumerate}
\item $\modK$ is standard.
\item $\modK\models \sigma$.
\item $\modK=\modK_{\uparrow\downarrow}$.
\end{enumerate}
\end{lem}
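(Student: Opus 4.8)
The plan is to prove the cycle of implications $(1)\Rightarrow(2)\Rightarrow(3)\Rightarrow(1)$, with the bulk of the work in $(2)\Rightarrow(3)$ and in the verification that $\sigma_1$, $\sigma_2$ are exactly the obstructions described informally in the paragraph preceding the lemma.

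First I would handle $(1)\Rightarrow(2)$. Suppose $\modK=\modM_\downarrow$ for some reduced $\modM\models T$. That $\modK\models\sigma_1$ is essentially immediate: by the previous lemma it amounts to saying that no reduction is needed in forming $\modK_\uparrow$, but $\modK_\uparrow=\modM_{\downarrow\uparrow}=\modM$ by Fact~\ref{downup}, and $\modM$ is already reduced, so no quotient is performed. Alternatively one checks $\sigma_1$ directly from the definition of Leibniz equality: if $a\neq b$ in $\modM$ then some atomic $\varphi(u,v)$ separates them at some $c$, i.e. $\varphi^\modM(a,c)\neq\varphi^\modM(b,c)$, and picking a rational $r$ strictly between the two values realizes the disjunct of $\sigma_1$. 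For $\sigma_2$, one uses that in $\modM_\downarrow$ we have $\modK\models Q_{\leq r}(\vec a)$ iff $Q^\modM(\vec a)\leq r$; since the real-valued condition ``$\leq r$'' is closed, $Q^\modM(\vec a)\leq s$ for all rational $s>r$ forces $Q^\modM(\vec a)\leq r$, which is exactly $\sigma_2$.

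Next, $(2)\Rightarrow(3)$, which I expect to be the heart of the argument. Assume $\modK\models\sigma=\sigma_1\wedge\sigma_2$; I want $\modK=\modK_{\uparrow\downarrow}$. The two structures share universe, functions, and constants, so it suffices to compare the interpretations of each $P_{\leq r}$ on each tuple $\vec a$. Write $N:=\modK_\uparrow$, so $P^N(\vec a)=\inf\{s\in\Q\cap[0,1):\modK\models P_{\leq s}(\vec a)\}$, and then $\modK_{\uparrow\downarrow}\models P_{\leq r}(\vec a)$ iff $P^N(\vec a)\leq r$. Here I must be slightly careful: $\modK_\uparrow$ is defined as the \emph{reduction} of the general structure just described, so I first invoke $\sigma_1$ together with the preceding lemma to conclude that the reduction is trivial, i.e. $\modK_\uparrow$ literally has the stated universe and predicate interpretations. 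Now for the predicate comparison: if $\modK\models P_{\leq r}(\vec a)$ then $r$ is in the set whose infimum defines $P^N(\vec a)$, so $P^N(\vec a)\leq r$ and hence $\modK_{\uparrow\downarrow}\models P_{\leq r}(\vec a)$. Conversely, suppose $P^N(\vec a)\leq r$; I want $\modK\models P_{\leq r}(\vec a)$. If $P^N(\vec a)<r$ there is a rational $s$ with $P^N(\vec a)\leq s<r$ and $\modK\models P_{\leq s}(\vec a)$, and then by the increasing property (which holds since $\modK\models T_\downarrow$) we get $\modK\models P_{\leq r}(\vec a)$. The remaining case $P^N(\vec a)=r$ is precisely where $\sigma_2$ is needed: by definition of the infimum, $\modK\models P_{\leq s}(\vec a)$ for a cofinal-from-above family of rationals $s>r$, hence (using increasing again) for \emph{all} rational $s>r$, and $\sigma_2$ then yields $\modK\models P_{\leq r}(\vec a)$. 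This closes the equivalence of interpretations, so $\modK=\modK_{\uparrow\downarrow}$.

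Finally, $(3)\Rightarrow(1)$ is the easiest: if $\modK=\modK_{\uparrow\downarrow}$, set $\modM:=\modK_\uparrow$. By part (iii) of the cited Fact, $\modK\models T_\downarrow$ implies $\modM=\modK_\uparrow\models T$, and $\modM$ is reduced by construction (it is defined as a reduction). Then $\modM_\downarrow=\modK_{\uparrow\downarrow}=\modK$, exhibiting $\modK$ as standard. The main obstacle, as indicated, is the bookkeeping in $(2)\Rightarrow(3)$: one must keep track of the three-way interplay between the real infimum defining $P^N$, the discrete increasing axioms satisfied by any model of $T_\downarrow$, and the extra ``right-continuity'' content of $\sigma_2$ that rules out the pathology of Example~\ref{standard}; together with the reduction-triviality supplied by $\sigma_1$, these are exactly what is needed and no more.
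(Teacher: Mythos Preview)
Your proof is correct and follows exactly the same route as the paper's: the cycle $(1)\Rightarrow(2)\Rightarrow(3)\Rightarrow(1)$, with $(1)\Rightarrow(2)$ read off from the definitions, $(3)\Rightarrow(1)$ by taking $\modM:=\modK_\uparrow$, and $(2)\Rightarrow(3)$ by unraveling what $\sigma_1$ and $\sigma_2$ say. The paper's own proof is a three-line sketch (``clear from the definition'', ``follows from unraveling the meaning of $\sigma$''), and your argument is precisely the unraveling it leaves to the reader.
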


\begin{proof}
(1) implies (2) is clear from the definition and (3) implies (1) follows from the fact that $M_{\downarrow\uparrow}=M$. Finally, the direction (2) implies (3) follows from unraveling the meaning of $\sigma$.
\end{proof}

In general, $T_\downarrow$ need not be a complete first-order theory.  For example, in Example \ref{standard}, setting $T:=\operatorname{Th}(\mathcal{M})$, we have that $\mathcal{K},\mathcal{K}'\models T_\downarrow$ although $\mathcal{K}\not\equiv\mathcal{K}'$ since they disagree on the truth of the sentence $\exists xP_{\leq 0}(x)$.  Nevertheless, one has the following:

\begin{fact}\label{completefact}(Lemma C.4 in \cite{GoKei})
If $\modM,\modM'\models T$ are $\aleph_1$-saturated general models of $T$, then $\modM_\downarrow\equiv \modM'_\downarrow$.
\end{fact}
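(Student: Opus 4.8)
The plan is to build a back-and-forth system directly between the first-order $\mathcal{L}_\downarrow$-structures $\modM_\downarrow$ and $\modM'_\downarrow$; this will in fact establish the stronger statement that they are $(\mathcal{L}_\downarrow)_{\infty,\omega}$-equivalent. Since $\modM$ and $\modM'$ are $\aleph_1$-saturated models of the complete general theory $T$, they satisfy the same general $\mathcal{L}$-sentences, so the empty map is $\mathcal{L}$-elementary. The system I would use is the nonempty family $I$ of all finite partial $\mathcal{L}$-elementary maps $\modM\to\modM'$, in the sense of Keisler's general logic.

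The first step is to check that every $f\in I$, say with domain $\vec a$ and $f(\vec a)=\vec a'$, is an honest partial isomorphism between the discrete structures $\modM_\downarrow$ and $\modM'_\downarrow$. Since $f$ preserves the value of every atomic general $\mathcal{L}$-formula, in particular those of the form $P(t_1,\dots,t_n)$ for $\mathcal{L}$-terms $t_i$, it preserves the truth of each $\mathcal{L}_\downarrow$-atomic formula $P_{\leq r}(\vec t)$ in both directions. The only subtle point is that $f$ must respect $=$ and $\neq$, which is where one must cope with the absence of a primitive equality symbol in general logic: for each atomic $\mathcal{L}$-formula $\varphi$ one has the general $\mathcal{L}$-formula $\theta_\varphi(x,y):=\sup_z|\varphi(x,z)-\varphi(y,z)|$, and $a\doteq b$ in $\modM$ if and only if $\theta_\varphi^{\modM}(a,b)=0$ for every such $\varphi$. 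Because $\modM$ is reduced this says $a_i=a_j$ iff $\theta_\varphi^{\modM}(a_i,a_j)=0$ for all $\varphi$; applying $\mathcal{L}$-elementarity of $f$ (and of $f^{-1}$) to the formulas $\theta_\varphi$ then yields $a_i=a_j\Leftrightarrow a'_i=a'_j$, and similarly for terms.

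The second step is the back-and-forth property. Given $f\in I$ with domain $\vec a$ and an element $b\in\modM$ not already among $\vec a$, I would transport the complete general type $\operatorname{tp}^{\modM}(b/\vec a)$ along $f$ to a type $q(x)$ over the finite set $\vec a'$; $\mathcal{L}$-elementarity of $f$ makes $q$ consistent with the general theory of $(\modM',\vec a')$, so $q$ is realized by some $b'\in\modM'$ by $\aleph_1$-saturation of $\modM'$ (it is a type over a finite set). Then $f\cup\{b\mapsto b'\}$ is $\mathcal{L}$-elementary by construction, and it is still a well-defined injective map: since $b\neq a_j$ and $\modM$ is reduced, $\operatorname{tp}^{\modM}(b/\vec a)$ cannot contain the conditions ``$\theta_\varphi(x,a_j)=0$ for every $\varphi$'', so $q$ entails $\theta_\varphi(x,a'_j)>0$ for some $\varphi$, for each $j$, forcing $b'\neq a'_j$ for all $j$. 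The backward step is symmetric, using $\aleph_1$-saturation of $\modM$. By the standard back-and-forth characterization of $L_{\infty,\omega}$-equivalence, $I$ witnesses $\modM_\downarrow\equiv_{\infty,\omega}\modM'_\downarrow$, and in particular $\modM_\downarrow\equiv\modM'_\downarrow$.

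I expect the main obstacle to be the careful handling of equality in the first two steps: general logic has no equality predicate, so one must first manufacture the formulas $\theta_\varphi$ detecting Leibniz equality and then use reducedness both to see that $\mathcal{L}$-elementary maps between reduced general structures are genuine $\mathcal{L}_\downarrow$-partial-isomorphisms and to rule out unwanted collisions when realizing a general type over a finite tuple. Once this bookkeeping is in place, the rest is the usual saturated back-and-forth argument and I anticipate no further difficulty.
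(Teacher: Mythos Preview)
The paper does not give its own proof of this statement; it is simply recorded as a Fact with a citation to Lemma~C.4 of \cite{GoKei}. Your back-and-forth argument is correct and is the natural route: finite partial general-elementary maps between the two reduced $\aleph_1$-saturated models form a back-and-forth system for $\modM_\downarrow$ and $\modM'_\downarrow$, and your use of the formulas $\theta_\varphi(x,y)=\sup_z|\varphi(x,z)-\varphi(y,z)|$ together with reducedness correctly handles equality, which is indeed the only delicate point. In fact you obtain the stronger conclusion $\modM_\downarrow\equiv_{(\modL_\downarrow)_{\infty,\omega}}\modM'_\downarrow$, and this is precisely the form of the result the paper implicitly relies on later when the infinitary sentence $\sigma$ is conjoined into the unstable formula.
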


\subsection{The discrete version of Goodrick's result}

\begin{defn}
We say that a first order discrete $\modL$-theory $T$ has the $\modL_{\infty,\omega}$ \textit{order property} if there is a $\modL_{\infty,\omega}$-formula $\varphi(\vec x,\vec y)$ such that for any linear ordering $I$, there is a model $\modM$ of $T$ and a set $(\vec a_i)_{i\in I}$ of tuples in $M$ such that $\modM\models \varphi(\vec a_i,\vec a_j)$ if and only if $i<j$. 
\end{defn}

\begin{rem}
The first order, discrete, theories $T$ with the  $\modL_{\infty,\omega}$ order property witnessed by an $\modL_{\omega,\omega}$-formula are precisely the unstable theories. 
\end{rem}

We now summarize the construction used by Goodrick in \cite{SBJohnthesis} to prove the discrete version of Theorem \ref{MainTeo}.

\begin{defn}[Definition 2.2.2 in \cite{SBJohnthesis}]
\noindent\begin{itemize}
    \item[i)] If $(I,<)$ is a linear order and $J\subseteq I$, then for $\vec a,\vec b\in I$, we use the abbreviation $\vec a\sim\vec b \mod J$ to mean that $\vec a$ and $\vec b$ have the same quantifier-free type in the discrete language of orders over the parameter set $J$.
    \item[ii)] Let $\modM$ be a discrete structure. If $(I,<)$ is an order, then an $I$-indexed sequence $(\vec a_s)_{s\in I}$ of finite tuples from $M$ is \textit{skeletal in} $\modM$ if it is indiscernible, and for any $\vec c\in M$, there is a finite $J\subseteq I$ such that if $\vec s,\vec t\in I$ and $\vec s\sim\vec t\mod J$, then for any $\varphi(\vec x,\vec y)$ in the language $\mathcal{L}_{\infty \omega}$ with the right number of variables, $\modM\models \varphi(\vec a_{\vec s},\vec c)\leftrightarrow\varphi(\vec b_{\vec t},\vec c).$
\end{itemize}
\end{defn}

Consider a first order, discrete, theory $T$ with the $\modL_{\infty,\omega}$-order property, witnessed by a formula $\psi(\vec x,\vec y)$, where both $\vec x$ and $\vec y$ have length $m$.  Let $\modL^1\supset\modL$ be an expansion of the language by a single $2m$-ary relation $R(\vec x,\vec y)$ and let $\mathbf K^1$ be the class of all models of $T$
expanded to $\modL^1$ by interpreting $R(\vec x,\vec y)$ by $\psi(\vec x,\vec y)$. If $T'$ is a theory in a language $\modL'\supseteq\modL$ and $\Gamma$ is a set of types in $T'$, then $\operatorname{PC}_{\modL}(T',\Gamma)$ is the projective class corresponding to all reducts to $\modL$ of models of $T'$ that omit all types in $\Gamma$. 

\begin{fact}[Fact 2.3.1 in \cite{SBJohnthesis}]
There is a first order, discrete, language $\modL^2\supseteq\modL^1$, a theory $T^2$ in $\modL^2$ with the following properties: 
\begin{itemize}
    \item[i)] $T^2\supseteq \operatorname{Th}_{\modL^1}(\mathbf K^1)$.
    \item[ii)] $T^2$ has Skolem functions.
    \item[iii)] For each subformula $\varphi(\vec z)$ of $\psi$, there is a relation $R_\varphi(\vec z)\in\modL^2$.
    \item[iv)] $\mathbf K^1=\operatorname{PC}_{\modL^1}(T^2,\Gamma)$, where $\Gamma$ is the set of types in $T^2$ of all types of elements in nonstandard interpretations of the $R_\varphi$'s.
    \item[v)] For every model $\modM\in \operatorname{PC}_{\modL^2}(T^2,\Gamma)$, every subformula $\varphi(\vec z)$ of $\psi$, and every $\vec a\in M$, $\modM\models R_\varphi(\vec a)$ if and only if $\modM\models\varphi(\vec a)$. 
\end{itemize}
\end{fact}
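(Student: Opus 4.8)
The plan is to \emph{name} every subformula of the infinitary witness $\psi$ by a fresh predicate symbol, record as much of the intended meaning of these predicates as first-order logic allows, add Skolem functions, and then let the omitting-types (that is, $\operatorname{PC}$) formalism carry the genuinely infinitary content. In slogan form: the infinitary constraint ``$R$ is interpreted as $\psi$'' is repackaged as (a first-order theory) plus (a set of omitted types), which is exactly the shape a $\operatorname{PC}$ class demands.

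Concretely, let $\operatorname{Sub}(\psi)$ be the (well-founded) set of subformulas of $\psi$, with $\modL$-formulas counted as atomic, and for each $\varphi(\vec z)\in\operatorname{Sub}(\psi)$ adjoin a fresh predicate symbol $R_\varphi$ of arity $|\vec z|$, identifying $R_\psi$ with the symbol $R$ already in $\modL^1$; call the resulting language $\modL^2_0\supseteq\modL^1$. Let $T^2_0$ consist of $\operatorname{Th}_{\modL^1}(\mathbf K^1)$ together with: for $\varphi$ an $\modL$-formula, the biconditional $\forall\vec z\,(R_\varphi(\vec z)\leftrightarrow\varphi(\vec z))$; for $\varphi$ built from its immediate subformula(s) by a finitary operation (negation, a finite conjunction or disjunction, or a quantifier), the evident finitary biconditional relating $R_\varphi$ to the $R_\chi$'s of those subformulas; and for $\varphi=\bigwedge_{i\in I}\chi_i$ (resp. $\varphi=\bigvee_{i\in I}\chi_i$) with $I$ infinite, only the one-sided axioms $\forall\vec z\,(R_\varphi(\vec z)\to R_{\chi_i}(\vec z))$ (resp. $\forall\vec z\,(R_{\chi_i}(\vec z)\to R_\varphi(\vec z))$) for each $i\in I$. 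Let $\Gamma_0$ contain, for each infinite conjunction $\bigwedge_{i\in I}\chi_i$ in $\operatorname{Sub}(\psi)$, the partial type $\{R_{\chi_i}(\vec z):i\in I\}\cup\{\neg R_{\bigwedge_i\chi_i}(\vec z)\}$ and, dually, the type $\{\neg R_{\chi_i}(\vec z):i\in I\}\cup\{R_{\bigvee_i\chi_i}(\vec z)\}$ for each infinite disjunction (one may instead take all complete completions of these; the $\operatorname{PC}$ class below is the same). Finally, Skolemize: let $\modL^2\supseteq\modL^2_0$ and $T^2\supseteq T^2_0$ be a Skolemization, of cardinality $|\modL|+|\operatorname{Sub}(\psi)|+\aleph_0$, and regard $\Gamma:=\Gamma_0$ as a set of $\modL^2$-types. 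Items (i)--(iii) hold by construction (and $\mathbf K^1\neq\emptyset$ since $T$ is consistent, having the order property).

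The heart of the matter is the claim: \emph{if $\modM\models T^2_0$ omits every type in $\Gamma_0$, then $R_\varphi^{\modM}=\varphi^{\modM}$ for every $\varphi\in\operatorname{Sub}(\psi)$}, proved by induction along the well-founded structure of $\varphi$. The $\modL$-formula case and the finitary-operation cases are immediate from the biconditional axioms and the induction hypothesis. For $\varphi=\bigwedge_{i\in I}\chi_i$: if $R_\varphi(\vec a)$ holds then each $R_{\chi_i}(\vec a)$ holds by the one-sided axiom, so each $\chi_i(\vec a)$ holds by the induction hypothesis, so $\varphi(\vec a)$; conversely, if $\varphi(\vec a)$ holds then each $\chi_i(\vec a)$, hence each $R_{\chi_i}(\vec a)$, and were $R_\varphi(\vec a)$ to fail, $\vec a$ would realize the corresponding type in $\Gamma_0$, contradicting omission, so $R_\varphi(\vec a)$ holds. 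The $\bigvee$ case is dual. Granting this, (v) follows, since the $\modL^2_0$-reduct of any $\modM\models T^2$ omitting $\Gamma$ models $T^2_0$ and still omits $\Gamma_0$ (the types involved use only $\modL^2_0$-symbols and the universe is unchanged). For (iv): a structure lies in $\mathbf K^1$ exactly when it is a model of $T$ interpreting $R$ as $\psi$, and any such structure expands canonically --- setting $R_\varphi:=\varphi^{\modM}$ --- to a model of $T^2_0$ with no nonstandard tuples, hence omitting $\Gamma_0$, and then via Skolemization (which adds no elements) to a model of $T^2$ still omitting $\Gamma$; conversely, any model of $T^2$ omitting $\Gamma$ satisfies $R^{\modM}=R_\psi^{\modM}=\psi^{\modM}$ by the claim, and its $\modL$-reduct models $T$, so its $\modL^1$-reduct lies in $\mathbf K^1$.

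I do not expect a genuine obstacle here: the one conceptual move is recognizing that an $\modL_{\infty,\omega}$-constraint can be traded for (a first-order theory) plus (a set of omitted types) --- the standard interface between infinitary definability and $\operatorname{PC}$ classes. What then takes a little care is (a) running the well-founded induction over $\operatorname{Sub}(\psi)$ correctly and (b) checking that Skolemization can be performed without disturbing either the $\operatorname{PC}_{\modL^1}$-description in (iv) or the property that no tuple realizes a type in $\Gamma$; both are routine, since Skolemization changes no universes and creates no new realizations of $\modL^2_0$-types. The one substantive point is to check that the omitted types catch exactly the right anomalies: at a finitary node the full biconditional axiom leaves no slack, while at an infinite $\bigwedge$- or $\bigvee$-node one of the two inclusions between $R_\varphi^{\modM}$ and its intended value $\varphi^{\modM}$ is forced by the one-sided axioms and the other by omitting the single designated type --- so $\Gamma$ as defined does precisely what (iv) and (v) demand.
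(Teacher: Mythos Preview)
The paper does not actually prove this statement; it is quoted verbatim as Fact~2.3.1 from Goodrick's thesis and used as a black box, so there is no proof in the paper to compare against. Your construction is correct and is exactly the standard device (in the spirit of Chang--Keisler and Shelah) for presenting an $\modL_{\infty,\omega}$-definable class as a $\operatorname{PC}$ class: name each subformula by a predicate, axiomatize the finitary build-up by first-order biconditionals, and push the genuinely infinitary content of the infinite conjunctions and disjunctions into the omitted types; the well-founded induction over $\operatorname{Sub}(\psi)$ is the right verification, and your remarks about Skolemization (same universe, hence no new realizations of $\modL^2_0$-types) are accurate. This is essentially the argument Goodrick gives as well, so your proposal matches both the intended content and the intended method.
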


\begin{fact}[Fact 2.3.2 in \cite{SBJohnthesis}]\label{EM(I)}
    With the notation of the previous fact, there is a mapping $\operatorname{EM}^2$ from the class of all linear orderings into $\operatorname{PC}_{\modL^2}(T^2,\Gamma)$ with the following properties: 
    \begin{itemize}
        \item[i)] $|\operatorname{EM}^2(I)|\leq |T^2|+|I|$.
        \item[ii)] For each linear order $I$, the model $\operatorname{EM}^2(I)$ contains an indiscernible sequence $(\vec a_s)_{s\in I}$ of $m$-tuples which is skeletal in $\operatorname{EM}^2(I)$, called its \textit{skeleton}.
        \item[iii)] For each $I$, the skeleton $(\vec a_s)_{s\in I}$ has the property that $\operatorname{EM}^2(I)\models R(\vec a_s,\vec a_t)$ if and only if $s<t$.
        \item[iv)] For any two orders $I$ and $J$, if $(\vec a_s)_{s\in I}$ is the skeleton of $\operatorname{EM}^2(I)$, $(\vec b_t)_{t\in J}$ is the skeleton of $\operatorname{EM}^2(J)$, and $\vec s\in I$ and $\vec t\in J$ are increasing finite tuples of the same length, then $\operatorname{tp}(\vec a_{\vec s})=\operatorname{tp}(\vec b_{\vec t})$.
        \item[v)] If $J$ is a subordering of $I$, then there is an elementary embedding $f$ of $\operatorname{EM}^2(J)\restriction \modL$ into $\operatorname{EM}^2(I)\restriction\modL$. 
    \end{itemize}
    We write $\operatorname{EM}^1(I)$ for $\operatorname{EM}^2(I)\restriction\modL^1$ and $\operatorname{EM}(I)$ for $\operatorname{EM}^2(I)\restriction\modL$. Note that the skeleton of $\operatorname{EM}^2(I)$ is still skeletal in $\operatorname{EM}^1(I)$.
\end{fact}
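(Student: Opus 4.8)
The plan is to run a Skolemized Ehrenfeucht--Mostowski construction over infinitary indiscernibles, in the style of \cite[Chapters VII--VIII]{Shelah}. One builds an EM blueprint (template) $\Phi$ in the language $\modL^2$ such that, for every linear order $I$, the $\modL^2$-Skolem hull of a $\Phi$-indexed copy of the indiscernibles indexed by $I$ is a model of $T^2$ omitting $\Gamma$; one then sets $\operatorname{EM}^2(I)$ to be this hull and reads off (i)--(v). The hypothesis on $T$ enters only through the construction of $\Phi$: the $\modL_{\infty,\omega}$ order property of $T$, witnessed by $\psi$, is exactly what supplies arbitrarily long increasing configurations on which $R$ holds precisely for $<$.

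To build $\Phi$: for each $n<\omega$, the order property gives $\modM\models T$ with $m$-tuples $(\vec a_i)_{i<n}$ satisfying $\psi(\vec a_i,\vec a_j)$ iff $i<j$; setting $R:=\psi^{\modM}$ puts $\modM$ in $\mathbf K^1$, so by clause (iv) of the preceding fact we may expand to a model of $T^2$ omitting $\Gamma$ that still carries this increasing $R$-chain. Compactness over these models, the Erd\H{o}s--Rado theorem, and an omitting-types argument to keep $\Gamma$ omitted in the limit yield a model $\modN\models T^2$ omitting $\Gamma$ and carrying an $\modL^2$-order-indiscernible sequence $(\vec c_i)_{i\in\Q}$ of $m$-tuples with $R(\vec c_i,\vec c_j)$ iff $i<j$. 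Taking $\modN$ large enough and iterating Erd\H{o}s--Rado with a coloring by $(\modL^2)_{\infty,\omega}$-types, we may in addition arrange that $(\vec c_i)_{i\in\Q}$ is indiscernible for $(\modL^2)_{\infty,\omega}$ over itself. Let $\Phi$ be the Skolemized blueprint determined by $(\modN,(\vec c_i)_{i\in\Q})$, using that $T^2$ has Skolem functions (clause (ii) of the preceding fact).

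The verification of (i)--(v) is then mostly mechanical. Each element of $\operatorname{EM}^2(I)$ is $\tau(\vec a_{s_1},\dots,\vec a_{s_k})$ for a Skolem term $\tau$ and $s_1<\dots<s_k$ in $I$, so $|\operatorname{EM}^2(I)|\le|\modL^2|+|I|=|T^2|+|I|$, which is (i); and the $\modL^2$-type of such an element equals the type in $\modN$ of $\tau(\vec c_{i_1},\dots,\vec c_{i_k})$ for any $i_1<\dots<i_k$ in $\Q$, so $\operatorname{EM}^2(I)$ omits $\Gamma$, i.e.\ belongs to $\operatorname{PC}_{\modL^2}(T^2,\Gamma)$. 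By construction the skeleton $(\vec a_s)_{s\in I}$ is $\modL^2$-indiscernible with $R(\vec a_s,\vec a_t)$ iff $s<t$, giving (ii)--(iii) apart from skeletality; for skeletality, given $\vec c=\tau(\vec a_{\vec r})$ one takes $J\subseteq I$ to be the set of indices occurring in $\vec r$ and notes that $\vec s\sim\vec t\bmod J$ forces $(\vec r,\vec s)$ and $(\vec r,\vec t)$ to have the same order type, so that $\varphi(\vec a_{\vec s},\vec c)\leftrightarrow\varphi(\vec a_{\vec t},\vec c)$ for every $\varphi\in\modL_{\infty,\omega}$ --- this last step being exactly where the infinitary indiscernibility of the skeleton and the correctness of the predicates $R_\varphi$ on subformulas of $\psi$ (clause (v) of the preceding fact, valid since $\Gamma$ is omitted) are used, and it is the delicate point discussed below. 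Clause (iv) is immediate from the definition of $\Phi$. For (v), if $J\subseteq I$ then sending the skeleton of $\operatorname{EM}^2(J)$ to the matching skeleton elements of $\operatorname{EM}^2(I)$ and commuting with Skolem terms defines an $\modL^2$-embedding which is $\modL^2$-elementary since $T^2$ has Skolem functions; restricting to $\modL$ gives the required elementary embedding $\operatorname{EM}(J)\to\operatorname{EM}(I)$, and the last assertion about $\operatorname{EM}^1(I)$ follows since $\modL\subseteq\modL^1\subseteq\modL^2$.

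\textbf{The main obstacle} is making the skeletality clause honest. Ordinary first-order indiscernibility with the $R$-pattern is cheap (Ramsey suffices), but skeletality concerns all $\modL_{\infty,\omega}$-formulas, and one must guarantee that the $\modL_{\infty,\omega}$-type of a finite tuple of skeleton elements over a finite tuple of skeleton elements is pinned down by the order type \emph{uniformly across all} $\operatorname{EM}^2(I)$ --- something the blueprint $\Phi$, which on its own records only first-order types, does not obviously transmit. The way to secure it is the strengthened Erd\H{o}s--Rado step above (which is why the generating model $\modN$ must be taken large, with the attendant cardinal arithmetic) together with the observation that, because every element of an EM model is a Skolem term over finitely many skeleton elements and $\Gamma$ is omitted, the only infinitary data that can matter --- $\psi$ and its subformulas --- is already computed by the first-order predicates $R_\varphi$, so that over the skeleton the $\modL_{\infty,\omega}$-type reduces to first-order information that the template controls. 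A lesser complication is the omitting-types argument needed so that the generating model $\modN$ omits $\Gamma$ while acquiring an infinite indiscernible sequence with the $R$-pattern.
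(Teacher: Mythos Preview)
This statement is presented in the paper as a \emph{Fact} quoted from \cite{SBJohnthesis}; the paper supplies no proof of its own. So there is nothing to compare your argument against --- the paper simply cites the result. Your outline is nonetheless a reasonable sketch of the Skolemized Ehrenfeucht--Mostowski construction, and clauses (i), (iii), (iv), (v) do follow from it essentially as you say.

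Where your sketch has a genuine gap is in the skeletality clause of (ii), which you rightly single out as the crux. Skeletality demands that $\varphi(\vec a_{\vec s},\vec c)\leftrightarrow\varphi(\vec a_{\vec t},\vec c)$ for \emph{every} $\varphi\in\modL_{\infty,\omega}$, not only for $\psi$ and its subformulas; hence the assertion that ``the only infinitary data that can matter --- $\psi$ and its subformulas --- is already computed by the first-order predicates $R_\varphi$'' is not correct. An infinite disjunction of first-order formulas having nothing to do with $\psi$ is still an $\modL_{\infty,\omega}$-formula that skeletality must cover, and the $R_\varphi$ device says nothing about it. Your other proposed fix --- arranging $(\modL^2)_{\infty,\omega}$-indiscernibility of the generating sequence in $\modN$ via a strengthened Erd\H{o}s--Rado step --- does not help either, for the reason you yourself note: the blueprint $\Phi$ carries only first-order information, so infinitary indiscernibility in $\modN$ is not transmitted to $\operatorname{EM}^2(I)$ for an arbitrary order $I$. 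Concretely, the natural family of partial isomorphisms of $\operatorname{EM}^2(I)$ induced by finite order-isomorphisms of skeleton indices is in general \emph{not} a back-and-forth system when $I$ lacks homogeneity, so one cannot read off $\modL_{\infty,\omega}$-equivalence from equality of order types. Establishing skeletality in the infinitary sense therefore needs an argument internal to the EM models that you have not supplied; for that one must go back to \cite{SBJohnthesis}.
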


Goodrick uses the previous set-up to establish the following:

\begin{fact}[Theorem 2.4.1 in \cite{SBJohnthesis}]\label{DiscreteUnstableSB}
If $T$  is complete and has the $\modL_{\infty,\omega}$ order property, then $T$ does not have the SB property. In fact, for any cardinal $\kappa$, there is a cardinal $\lambda>\kappa$ such that $T$ has a collection of models $(\modM_i)_{i<2^\lambda}$, each having size $\lambda$, and such that, for \emph{distinct} $i,j<2^\lambda$, $\modM_i$ and $\modM_j$ are elementarily bi-embeddable but not isomorphic.  
\end{fact}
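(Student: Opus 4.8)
The plan is to derive the statement from Shelah's construction of many pairwise non-isomorphic models of an unstable theory (see \cite[Theorem VIII.3.2]{Shelah}), run through the Ehrenfeucht--Mostowski functor of Fact \ref{EM(I)}, with an extra layer of care so that the resulting models all lie in a single elementary bi-embeddability class. Fix an $\modL_{\infty,\omega}$-formula $\psi(\vec x,\vec y)$, with $\vec x,\vec y$ of length $m$, witnessing the $\modL_{\infty,\omega}$ order property of $T$, and let $I\mapsto\operatorname{EM}(I)$ be the associated functor from linear orders to models of $T$ provided by Fact \ref{EM(I)}. The features I will use are: $\operatorname{EM}(I)\models T$ has size at most $|T^2|+|I|$; it carries a skeleton $(\vec a_s)_{s\in I}$ of $m$-tuples which is skeletal in it; the skeleton is linearly ordered by $\psi$ according to $I$ (through the relation $R$); and, crucially, every order embedding of linear orders $J\hookrightarrow I$ lifts to an elementary embedding $\operatorname{EM}(J)\hookrightarrow\operatorname{EM}(I)$. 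It then suffices to exhibit, for a well-chosen cardinal $\lambda>\kappa$, a family $(\modM_i)_{i<2^\lambda}$ of models of $T$, each of size $\lambda$, that is pairwise elementarily bi-embeddable and pairwise non-isomorphic: any two of them already witness that $T$ lacks the SB-property, and the whole family gives the refined conclusion. I note at the outset that, since the facts preceding Fact \ref{EM(I)} are set up for all subformulas of the possibly infinitary $\psi$ (each receiving its own predicate $R_\varphi$, with $T^2$ Skolemized), nothing below is sensitive to $\psi$ being infinitary rather than first-order --- this is exactly why the argument survives the passage to infinitary formulas.

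First I would fix a regular cardinal $\lambda>\kappa+|T^2|$ and recall Shelah's index orders. To each stationary set $S\subseteq\{\delta<\lambda:\operatorname{cf}(\delta)=\omega\}$ one attaches a linear order $I_S$ of size $\lambda$ whose ``level-$\delta$ cut'' is an $\omega$-cofinal cut that is filled exactly when $\delta\notin S$; one then reads off from $\operatorname{EM}(I_S)$ the invariant $\operatorname{Inv}(S)$, namely the set of $\delta$ of cofinality $\omega$ for which the level-$\delta$ cut of the skeleton is realized in $\operatorname{EM}(I_S)$, and one checks that $\operatorname{Inv}(S)$ is the complement of $S$ modulo the club filter on $\lambda$. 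Skeletality is precisely the property that makes $\operatorname{Inv}$ an isomorphism invariant modulo club: if $\Phi\colon\operatorname{EM}(I_S)\to\operatorname{EM}(I_{S'})$ is an isomorphism, then skeletality forces the $\Phi$-image of the skeleton to agree with the skeleton of $\operatorname{EM}(I_{S'})$ on a club of levels, so $\Phi$ carries $\operatorname{Inv}(S)$ to $\operatorname{Inv}(S')$ modulo club, forcing $S=S'$ modulo club. Since $\{\delta<\lambda:\operatorname{cf}(\delta)=\omega\}$ splits into $\lambda$ pairwise disjoint stationary sets (Solovay), there are $2^\lambda$ stationary sets pairwise inequivalent modulo club, and therefore $2^\lambda$ pairwise non-isomorphic models $\operatorname{EM}(I_S)$, each of size $|T^2|+\lambda=\lambda$.

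The hard part will be the remaining requirement --- bi-embeddability --- and this is exactly where the argument refines Shelah's: one must choose the orders $I_S$, as $S$ ranges over a suitable $2^\lambda$-element family of stationary sets, to be \emph{pairwise bi-embeddable as linear orders} while retaining pairwise distinct club-classes. This is consistent because $\operatorname{Inv}$ is only a mod-club invariant about which $\omega$-cuts of a fixed $\lambda$-chain are filled, and passing to a bi-embeddable order need not alter it: using a suitable variant of Shelah's orders, one builds for each pair $S,S'$ an order embedding $I_S\hookrightarrow I_{S'}$ by a transfinite recursion that routes the at most $\lambda$ many filled $\omega$-cuts of $I_S$ into filled $\omega$-cuts of $I_{S'}$ (exploiting the co-stationarity of the sets involved), all while keeping $\operatorname{Inv}(S)\neq\operatorname{Inv}(S')$ modulo club. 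Granting that the $I_S$ can be so chosen, the monotonicity clause of Fact \ref{EM(I)} promotes each order embedding $I_S\hookrightarrow I_{S'}$ to an elementary embedding $\operatorname{EM}(I_S)\hookrightarrow\operatorname{EM}(I_{S'})$, so $(\operatorname{EM}(I_S))_S$ is a family of $2^\lambda$ models of $T$, each of size $\lambda$, that is pairwise elementarily bi-embeddable and, by the previous paragraph, pairwise non-isomorphic. In particular $T$ does not have the SB-property, and this same family is the refined conclusion of the statement.
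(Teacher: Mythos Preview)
The paper does not actually prove this statement: it is recorded as a Fact with a citation to Goodrick's thesis, and the only ``proof'' in the paper is Remark~\ref{JohnsArg}, which says in one sentence that Goodrick constructs $2^\lambda$ many linear orders $J_\alpha$ whose associated models $\operatorname{EM}(J_\alpha)$ are pairwise elementarily bi-embeddable but nonisomorphic. Your sketch is exactly this strategy fleshed out---build EM models over Shelah-style orders indexed by stationary sets, distinguish them via the club-invariant of which $\omega$-cuts are filled, and refine Shelah's construction so that the index orders are pairwise bi-embeddable, which Fact~\ref{EM(I)}(v) then promotes to elementary bi-embeddability---so you are in complete agreement with the approach the paper attributes to Goodrick.

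One caveat worth flagging: the genuinely new content over Shelah is precisely the step you label ``the hard part,'' namely arranging that the orders $I_S$ are pairwise bi-embeddable while their invariants remain distinct modulo club. Your description of this step (``a transfinite recursion that routes the at most $\lambda$ many filled $\omega$-cuts of $I_S$ into filled $\omega$-cuts of $I_{S'}$'') is a plausible outline but not a verification; in Goodrick's thesis this is where the real work lies, and the specific choice of orders (and the exact form of the invariant argument showing non-isomorphism survives) needs care. Since the paper itself defers entirely to \cite{SBJohnthesis} for this, your level of detail is already more than what the paper provides, but if you intend this as a standalone proof you would need to pin down that construction explicitly.
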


\begin{rem}\label{JohnsArg}
Goodrick proves Fact \ref{DiscreteUnstableSB} constructing a collection of $2^\lambda$ many orders $J_\alpha$, where $\lambda$ is as in Fact \ref{DiscreteUnstableSB}, for which their associated models $\operatorname{EM}(J_\alpha)$ are pairwise elementarily bi-embeddable but nonisomorphic.
\end{rem}

\section{Main theorem}

\begin{defn}
We say that a complete general theory $T$ is \textit{unstable}\footnote{In \cite{keisler}, Keisler characterizes stability of general theories both in terms of stable independence relations and in terms of definability of types over models.  Standard arguments show that stability of general theories is further equivalent to no formula having the order property as in our definition.} if there is a formula $\psi(\vec x,\vec y)$ and model $\modM\models T$ such that there is a collection $(\vec a_n)_{n<\omega}$ of tuples in $\modM$ and real numbers $r<s$ such that $\psi(\vec a_i,\vec a_j)\leq r$ if $i<j$ and $\psi(\vec a_i,\vec a_j)\geq s$ if $i>j$.
\end{defn}

\noindent\textit{We now work towards proving Theorem \ref{MainTeo}}: 

In order to obtain a contradiction, suppose that $T$ is a complete, unstable general theory with the SB-property. Furthermore, we can Morleyize the theory and assume every formula is equivalent to a predicate in the language. As a result, there is a predicate $P(\vec x,\vec y)$ in the language that is an unstable formula in $T$ witnessed by values $r$ and $s$.  Moreover, by compactness, for any linear ordering $I$, there is $\modM\models T$ and $(a_\alpha)_{\alpha\in I}$ such that $P^{\modM}(a_\alpha,a_\beta)\leq r$ if $\alpha<\beta$ while $P^{\modM}(a_\alpha,a_\beta)\geq s$ if $\alpha\geq \beta$. Furthermore, by
modifying the formula, we may take $r=0$ and $s=1$. 

Set $S:=\operatorname{Th}(M_\downarrow)$, where $M$ is some (equiv. any) $\aleph_1$-saturated model of $T$ (see Fact \ref{completefact}). Note that $S$ is now a first order theory, while $T$ is a general theory.

Let $\psi(\vec x,\vec y)$ be $P_{\leq 1/2}(\vec x,\vec y)\wedge\sigma$. Then $\psi(\vec x, \vec y)$ is a $(\modL_{\downarrow})_{\infty,\omega}$-formula with the order property in $S$:  given any linear ordering $I$, there is an $\aleph_1$-saturated $\mathcal{M}'\models T$ and a sequence $(a_\alpha)_{\alpha\in I}$ in $\mathcal{M}'$ such that $\modM'_\downarrow\models\psi(a_\alpha,a_\beta)$ if and only if $\alpha<\beta$.

By Remark \ref{JohnsArg}, there are  uncountably many orders $J_\alpha$ such that, setting  $\modK_\alpha:=\operatorname{EM}^2(J_\alpha)\restriction\modL_\downarrow$, we have that $\modK_\alpha\models S$ and the $\modK_\alpha$'s  are elementarily bi-embeddable but pairwise nonisomorphic. 

\begin{lem}\label{DownWitness}
$\modK_\alpha\models\sigma$ for all $\alpha$.
\end{lem}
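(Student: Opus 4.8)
The plan is to exploit the fact that $\sigma$ is, by design, one of the two conjuncts of the infinitary formula $\psi(\vec x,\vec y)=P_{\leq 1/2}(\vec x,\vec y)\wedge\sigma$ that is fed into Goodrick's Ehrenfeucht--Mostowski machinery from Subsection 2.3. Since that machinery produces, for each linear order $I$, a model $\operatorname{EM}^2(I)$ whose skeleton realizes instances of $\psi$, and since $\sigma$ is a \emph{sentence}, a single such instance already forces $\modK_\alpha=\operatorname{EM}^2(J_\alpha)\restriction\modL_\downarrow$ to satisfy $\sigma$.

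In more detail: recall that in the present application the base language of Goodrick's construction is $\modL_\downarrow$, the auxiliary formula is $\psi$, and $\modK_\alpha=\operatorname{EM}^2(J_\alpha)\restriction\modL_\downarrow=\operatorname{EM}(J_\alpha)$. Fix $\alpha$ and put $I:=J_\alpha$. First I would note that $\operatorname{EM}^2(I)$, being a model of $T^2$ that omits every type in $\Gamma$, has its $\modL^1$-reduct $\operatorname{EM}^1(I):=\operatorname{EM}^2(I)\restriction\modL^1$ lying in $\operatorname{PC}_{\modL^1}(T^2,\Gamma)=\mathbf K^1$ (the fourth clause of Fact 2.3.1 in \cite{SBJohnthesis}); hence in $\operatorname{EM}^1(I)$ the symbol $R$ is interpreted precisely by $\psi$. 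Next, letting $(\vec a_s)_{s\in I}$ be the skeleton of $\operatorname{EM}^2(I)$ and choosing $s<t$ in $I$ (possible since the orders $J_\alpha$ appearing in Remark~\ref{JohnsArg} are infinite), the third clause of Fact~\ref{EM(I)} gives $\operatorname{EM}^2(I)\models R(\vec a_s,\vec a_t)$, hence $\operatorname{EM}^1(I)\models R(\vec a_s,\vec a_t)$, and therefore $\operatorname{EM}^1(I)\models\psi(\vec a_s,\vec a_t)$, i.e.\ $\operatorname{EM}^1(I)\models P_{\leq 1/2}(\vec a_s,\vec a_t)\wedge\sigma$; in particular $\operatorname{EM}^1(I)\models\sigma$. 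Finally, since $\sigma$ is an $(\modL_\downarrow)_{\infty,\omega}$-sentence and $\modL_\downarrow\subseteq\modL^1$, its truth is preserved under the reduct, so $\modK_\alpha=\operatorname{EM}^1(I)\restriction\modL_\downarrow\models\sigma$.

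I do not expect a serious obstacle here, since the whole point is that $\sigma$ was deliberately built into $\psi$. The one place to be careful is the bookkeeping of which reduct ($\modL_\downarrow\subseteq\modL^1\subseteq\modL^2$) one is working in, and in particular the use of the fourth clause of Fact 2.3.1 to identify $R$ with $\psi$ on $\operatorname{EM}^1(I)$: the equivalence $R\leftrightarrow\psi$ cannot be extracted from $\operatorname{Th}_{\modL^1}(\mathbf K^1)$ directly, since $\psi$ is infinitary, so one genuinely needs to pass through the class $\mathbf K^1$. One could instead use the fifth clause of Fact 2.3.1 together with the relation symbol $R_\sigma$ associated to the subformula $\sigma$ of $\psi$, but routing through $\mathbf K^1$ seems cleaner. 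It is also worth recording explicitly that an instance of $\psi$ is actually realized, which is why we need the skeleton to contain a comparable pair $s<t$.
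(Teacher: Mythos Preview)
Your proposal is correct and follows essentially the same route as the paper's proof: use Fact~\ref{EM(I)}(iii) to get an instance $R(\vec a_s,\vec a_t)$ on the skeleton, identify $R$ with $\psi=P_{\leq 1/2}\wedge\sigma$ in $\operatorname{EM}^1(J_\alpha)$, extract the sentence $\sigma$, and pass to the $\modL_\downarrow$-reduct. The paper's own argument is a two-line compression of this; your version simply makes explicit the step (via Fact~2.3.1(iv) and the definition of $\mathbf K^1$) that justifies replacing $R$ by $\psi$, which the paper leaves implicit.
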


\begin{proof}
Since $\operatorname{EM}^2(J_\alpha)$ has skeleton $(b_s:s\in J_\alpha)$, by Fact \ref{EM(I)} we have that $R(b_s,b_t)$ if and only if $s<t$. In particular $\operatorname{EM}^2(J_\alpha)\models\sigma$, so $\modK_\alpha\models\sigma.$
\end{proof}

By Fact \ref{updownrule}, $(\modK_\alpha)_\uparrow\models T$.  By construction, for every pair $\alpha,\beta$, there exists an elementary embedding $\Phi_{\alpha,\beta}:\modK_\alpha\hookrightarrow \modK_\beta$. This elementary embedding induce an embedding $\Tilde\Phi_{\alpha,\beta}:(\modK_\alpha)_\uparrow\hookrightarrow(\modK_\beta)_\uparrow$,  which is elementary since $T$ has QE. By assumption, $T$ has the SB-property, thus $(\modK_\alpha)_\uparrow\cong(\modK_\beta)_\uparrow,$ and thus
$(\modK_\alpha)_{\uparrow\downarrow}\cong(\modK_\beta)_{\uparrow\downarrow}$. By Lemmas \ref{DownLemma} and \ref{DownWitness}, $\modK_\alpha\cong K_\beta$, which is a contradiction. 

\section{Towards a result for continuous theories with the SB-property}

Suppose that $\mathcal{L}$ is a continuous language and that $T$ is an $\mathcal{L}$-theory.  We may also view $\mathcal{L}$ as a general language.  In this case, a general model of $T$ is what is usually called \emph{pre-structure}, in the sense of continuous logic\footnote{As pointed out in \cite[Subsection 2.5]{keisler}, $T$ knows that the metric symbol should be interpreted as a pseudometric and that the symbols should obey the appropriate moduli of uniform continuity.}, that is also a model of $T$, whereas a reduced general model of $T$ is what is usually called a \emph{pre-complete} model of $T$, in that the interpretation of the metric symbol is indeed a metric, albeit possibly incomplete.

This poses a problem when it comes to the SB-property, for it is a priori possible that a continuous (complete) theory $T$ may have the SB-property as a continuous theory but does not have the SB-property as a general theory.  Indeed, the latter would require that any two pre-complete models of $T$ that are elementarily bi-embeddable are indeed isomorphic, which is a stronger requirement.

This leads to the following question:

\begin{preg}\label{continuousquestion}
If $T$ is a continuous theory with the SB-property, does $T$ still have the SB-property when viewed as a theory in Keisler's general real-valued logic?
\end{preg}

If the following question has a positive answer, then the main result of our paper yields a positive answer to Question \ref{mainquestion}, for stability of $T$ is independent of whether we view $T$ as a continuous theory or general theory.

Irrespective of the truth of the previous question, we offer two strategies for settling Question \ref{mainquestion}.

First, we note the following easy result:

\begin{lem}
Let $\vec x=(x_i)_{i<\omega}$ be a countably infinite tuple of variables and let $\tau$ be the following $(\mathcal{L}_\downarrow)_{\omega_1,\omega_1}$-sentence:
$$\forall \vec x\left(\bigwedge_r\bigvee_{m}\bigwedge_{k,l\geq m}(d_{\leq r}(x_k,x_l)\to \exists x\bigwedge_r\bigvee_m\bigwedge_{k\geq m}d_{\leq r}(x_k,x)\right).$$
Then for all $\modK\models T_\downarrow$, if $\modK\models \tau$, then $\modK_\uparrow$ is complete.
\end{lem}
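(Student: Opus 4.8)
The plan is to unwind the meaning of the sentence $\tau$ in the general structure $\modK_\uparrow$. First I would recall that "completeness" here means that the interpretation $d^{\modK_\uparrow}$ of the metric symbol is a genuine \emph{complete} metric on $\modK_\uparrow$; since $\modK_\uparrow$ is already reduced and $\modK\models T_\downarrow$ forces $d$ to be interpreted as a (possibly incomplete) metric, the only thing left to verify is that every Cauchy sequence has a limit. So fix a sequence $(c_k)_{k<\omega}$ in $\modK_\uparrow$ that is Cauchy with respect to $d^{\modK_\uparrow}$, and I want to produce a point $c$ with $d^{\modK_\uparrow}(c_k,c)\to 0$.

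The second step is the translation dictionary between the discrete predicates $d_{\leq r}$ in $\modK$ and the real-valued $d^{\modK_\uparrow}$. By definition of $\modK_\uparrow$, for $a,b\in K$ we have $d^{\modK_\uparrow}(a,b)=\inf\{s\in\Q\cap[0,1): \modK\models d_{\leq s}(a,b)\}$, and since $\modK\models T_\downarrow$ is increasing and satisfies $\sigma_2$ (the "no jump" condition built into membership in $T_\downarrow$ via Fact~2.8(ii)... more precisely this is automatic for models of $T_\downarrow$), the statement $\modK\models d_{\leq r}(a,b)$ is equivalent to $d^{\modK_\uparrow}(a,b)\leq r$. Hence the inner clause $\bigwedge_r\bigvee_m\bigwedge_{k,l\geq m}d_{\leq r}(x_k,x_l)$, evaluated at our sequence, says exactly: for every rational $r>0$ there is $m$ with $d^{\modK_\uparrow}(c_k,c_l)\leq r$ for all $k,l\geq m$ — i.e.\ $(c_k)$ is Cauchy. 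Similarly $\exists x\bigwedge_r\bigvee_m\bigwedge_{k\geq m}d_{\leq r}(x_k,x)$ asserts the existence of a limit point of $(c_k)$ in the metric $d^{\modK_\uparrow}$. So the whole sentence $\tau$ says precisely "every Cauchy sequence has a limit," which is completeness of $d^{\modK_\uparrow}$; combined with the metric (not just pseudometric) already being guaranteed, $\modK_\uparrow$ is a complete metric structure, i.e.\ a genuine continuous-logic model, hence complete in the required sense.

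I would then also point out the routine compatibility checks: that the function symbols of $\modL$ are interpreted as uniformly continuous maps and the predicates as uniformly continuous into $[0,1]$ — but these follow from $\modK\models T_\downarrow$ already knowing the relevant moduli of uniform continuity (as noted in the footnote referencing \cite[Subsection 2.5]{keisler}), so no new work is needed there; completeness of the metric is the only genuinely missing ingredient and $\tau$ supplies it.

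The main obstacle, and the one point that needs care rather than being purely formal, is justifying the passage from "$\modK\models d_{\leq r}(a,b)$" to "$d^{\modK_\uparrow}(a,b)\leq r$" at the \emph{endpoint} $r$ rather than just for $r'>r$: a priori the infimum defining $d^{\modK_\uparrow}$ need not be attained, which is exactly the "incompleteness phenomenon" of Example~2.10. However, for models of $T_\downarrow$ this is controlled: $T_\downarrow$ is defined so that $\modK_\uparrow\models T$, and any model of $T_\downarrow$ is increasing, so the only remaining subtlety is the continuity-from-above of the predicates, which is where one must invoke that $d$ (being the metric symbol of a continuous $T$) is forced by $T$ to be interpreted as a metric with a prescribed modulus, so that its discretizations $d_{\leq r}$ do satisfy the $\sigma_2$-type closure even when $\modK$ is not standard. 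Spelling this out — that for the specific symbol $d$ one always has $\modK\models d_{\leq r}(a,b)\iff d^{\modK_\uparrow}(a,b)\le r$ on models of $T_\downarrow$, so that the Cauchy and limit reformulations above are exact — is the crux; once it is in place the rest is bookkeeping.
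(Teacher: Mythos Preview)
Your unwinding of $\tau$ as ``every Cauchy sequence converges'' is exactly right, and since the paper gives no proof beyond calling this an ``easy result,'' your approach is the intended one.

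One correction to your last paragraph: you claim that $\sigma_2$-type closure is automatic for models of $T_\downarrow$ (or at least for the symbol $d$), but this is false---the paper's own Example~\ref{standard} exhibits $\modK'\models T_\downarrow$ violating $\sigma_2$, and nothing about $d$ being the metric symbol changes this at the level of $T_\downarrow$. Fortunately the full biconditional $\modK\models d_{\leq r}(a,b)\iff d^{\modK_\uparrow}(a,b)\le r$ is not needed. The lemma only requires the one-sided implications
\[
d^{\modK_\uparrow}(a,b)<r\ \Longrightarrow\ \modK\models d_{\leq r}(a,b)
\qquad\text{and}\qquad
\modK\models d_{\leq r}(a,b)\ \Longrightarrow\ d^{\modK_\uparrow}(a,b)\le r,
\]
the first following from increasingness (which \emph{is} automatic for models of $T_\downarrow$) and the second immediate from the infimum defining $\uparrow$. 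With these in hand, a sequence Cauchy in $\modK_\uparrow$ satisfies the antecedent of $\tau$ in $\modK$, and the consequent of $\tau$ yields a limit in $\modK_\uparrow$. So simply drop the endpoint discussion and your argument is complete.
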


Thus, if the analogue of Goodrick's results from Subsection 2.3 above held for $\mathcal{L}_{\infty,\omega_1}$-formulae (where one is allowed to quantify over countably many variables), then our method of proof would continue to hold, yielding a positive answer to Question \ref{continuousquestion}.  Interestingly enough, in \cite[Remark 2.4.7]{SBJohnthesis}, Goodrick himself asks if his methods extend to cover $\mathcal{L}_{\infty,\infty}$-formulae.  As far as we are aware, the possibility of this generalization holding is still an open question.

Another possible avenue is the following.  First we introduce:

\begin{notation}
Following the notation of the proof of Theorem \ref{MainTeo}, we write $\overline{\modK_\alpha}$ to denote the completion of $(\modK_\alpha)_\uparrow$.
\end{notation}

\begin{preg}
If $\modK_\alpha\not\cong \modK_\beta$, is it true that $(\overline{\modK_\alpha})_\downarrow\not\cong (\overline{\modK_\beta})_\downarrow$?
\end{preg}

Given how the structures $\modK_\alpha$ are ``controlled'' by their skeleta, it seems plausible that this control could extend to the downs of the completions.

% \addcontentsline{toc}{section}{Referencias}
% \bibliographystyle{abbrv}
% \bibliography{Bib}

\begin{thebibliography}{99}
\bibitem{ArBeCu} C. Argoty, A. Berenstein and N. Cuervo Ovalle, \emph{The SB-property on metric structures}, Archive for Mathematical Logic, 2025, doi.org/10.1007/s00153-024-00949-y.

%\bibitem{CuGoLe} N. Cuervo Ovalle, I. Goldbring and N. Levi, \emph{The Schröder-Bernstein property for operators on Hilbert spaces}, arXiv e-prints, 2025, p. arXiv: 2503.20140.

\bibitem{GoKei}I. Goldbring and J. Keisler, \emph{Continuous sentences preserved under reduced products}, Journal of Symbolic Logic, Volume 87 (2022), 649-681.

\bibitem{SBJohnthesis} J. Goodrick, \emph{When are elementarily bi-embeddable models isomorphic?}, Ph.D. thesis, University of California, Berkeley, 2007.

% \bibitem{SBarxiv} J. Goodrick, \emph{When does elementary bi-embeddability imply isomorphism?}, arXiv 0705.1849.

 \bibitem{SBpropertyWeak} J.Goodrick and M.C. Laskowski. \emph{The Schröder-Bernstein property for weakly minimal theories},  Israel J. Math., 2012, vol. 188, p. 91-110.

\bibitem{SBpropertyJohn} J. Goodrick and M.C. Laskowski, \emph{The Schr\"oder-Bernstein property for a-saturated models}, Proceedings of the American Mathematical Society \textbf{142} (2014), 1013–1023.

\bibitem{keisler} H. J. Keisler, \emph{Model Theory for Real-valued Structures}, in Beyond First Order Model Theory, Volume II, ed. by Jose Iovino, Chapman \& Hall (2023).

\bibitem{SBpropertyW-stable} T.A. Nurmagambetov, \emph{Characterization of $\omega$-stable theories of bounded dimension}, Algebra and Logic \textbf{28} (1989), 388–396.

\bibitem{MutualEmbedding} T. A. Nurmagambetov, \emph{The mutual embeddability of models}, Theory of Algebraic Structures (in Russian), Karagand. Gos. Univ., 1985, pp. 109–115.

\bibitem{Shelah}S. Shelah, \emph{Classification Theory}, volume 92 of Studies in logic and the foundations of mathematics. North-Holland, second edition, 1990.


\end{thebibliography}
\end{document}